\tikzstyle{vertex}=[circle,draw=black,fill=black,inner sep=0,minimum size=3pt,text=white,font=\footnotesize]
\newtheorem{thm}{Theorem}[section]
\newtheorem{lemma}[thm]{Lemma}
\newtheorem{clm}[thm]{Claim}
\newtheorem*{lemma*}{Lemma}
\newtheorem*{proposition*}{Proposition}
\newtheorem*{theorem*}{Theorem}
\newcommand\ex{\ensuremath{\mathrm{ex}}}
\newcommand\cH{{\mathcal H}}
\newcommand\cN{{\mathcal N}}
\newcommand{\ignore}[1]{}
\title{Degree powers and number of stars in graphs with a forbidden broom}
\author{Dániel Gerbner\\ \small Alfr\'ed R\'enyi Institute of Mathematics, HUN-REN\\
\small \texttt{gerbner.daniel@renyi.hu}}
\date{}
\begin{document}

\maketitle

\begin{abstract} 
 Given a graph $G$ with degree sequence $d_1,\dots, d_n$ and a positive integer $r$, let $e_r(G)=\sum_{i=1}^n d_i^r$. We denote by $\ex_r(n,F)$ the largest value of $e_r(G)$ among $n$-vertex $F$-free graphs $G$, and by $\ex(n,S_r,G)$ the largest number of stars $S_r$ in $n$-vertex $F$-free graphs. The \textit{broom} $B(\ell,s)$ is the graph obtained from an $\ell$-vertex path by adding $s$ new leaves connected to a penultimate vertex $v$ of the path. 
 
 We determine $\ex_r(n,B(\ell,s))$ for $r\ge 2$, any $\ell,s$ and sufficiently large $n$, proving a conjecture of Lan, Liu, Qin and Shi. We also determine $\ex(n,S_r,B(\ell,s))$ for $r\ge 2$, any $\ell,s$ and sufficiently large $n$.
\end{abstract}

\section{Introduction}

Given a graph $G$ with degree sequence $d_1,\dots, d_n$ and a positive integer $r$, let $e_r(G)=\sum_{i=1}^n d_i^r$. This is a well-studied graph parameter, see the survey \cite{agmm}. Caro and Yuster \cite{cy} initiated the study of $\ex_r(n,F):=\max\{e_r(G): \text{ $G$ is an $n$-vertex $F$-free graph}\}$.

A closely related topic is counting stars in $F$-free graphs. More generally, let $\cN(H,G)$ denote the number of copies of $H$ in $G$ and $\ex(n,H,F):=\max\{\cN(H,G):  \text{ $G$ is an $n$-vertex}$ $F$-free graph$\}$. Clearly for $r>1$ we have $\cN(S_r(G))=\sum_{i=1}^n \binom{d_i}{r}$. The connection of these two topics was investigated by the author \cite{ge5}. We will use only the following simple observation: $\ex_r(n,F)=(r!+o(1))\ex(n,S_r,F)$ for any graph $F$.

In this paper we deal with forbidden brooms. The \textit{broom} $B(\ell,s)$ is the graph obtained from an $\ell$-vertex path $P_\ell$ by adding $s$ new leaves connected to a penultimate vertex $v$ of the path. We call $v$ the \textit{center} of the broom. Caro and Yuster \cite{cy} observed that $\ex(n,B(4,s))=e_r(S_n)$ for $r\ge 2$ and $n>2s+4$. Let $H(k,n)$ denote the $n$-vertex graph with $k$ vertices of degree $n-1$ and $n-k$ vertices of degree $k$. In other words, we take $K_{k,n-k}$ and add all the edges inside the part of order $k$.
Let $H^*(k,n)$ be the graph obtained from $H(k,n)$ by adding an arbitrary edge. Let $F_n$ be the graph obtained from $S_n$ by adding $\lfloor (n-1)/2\rfloor$ independent edges between the leaves.

Lan, Liu, Qin and Shi \cite{llqs} stated a conjecture on $\ex_r(n,B(\ell,s))$ and proved the conjecture for $5\le \ell\le 7$. Wang and Yin \cite{wy} proved it for $\ell=8$. Here we prove the conjecture in full, and also the analogous result on $\ex(n,S_r,B(\ell,s))$. Let $k=\lfloor (l-2)/2\rfloor$.

\begin{thm}\label{main} \textbf{(i)}
  If $\ell$ is even, $r\ge 2$ and $n$ is sufficiently large, then $\ex_r(n,B(\ell,s))=e_r(H(k,n))$. If $\ell$ is odd and $\ell\ge 7$ or $\ell=5$ and $s=0$, then $\ex_r(n,B(\ell,s))=e_r(H^*(k,n))$. If $\ell=5$ and $s>0$, then $\ex_r(n,B(\ell,s))=e_r(F_n)$. Moreover, $H(k,n)$, $H^*(k,n)$ and $F_n$ are the unique extremal graphs in each case.

  \textbf{(ii)} If $\ell$ is even, $r\ge 2$ and $n$ is sufficiently large, then $\ex(n,S_r,B(\ell,s))=\cN(S_r,H(k,n))$. If $\ell$ is odd and $\ell\ge 7$ or $\ell=5$ and $s=0$, then $\ex(n,S_r,B(\ell,s))=\cN(S_r,H^*(k,n))$. If $\ell=5$ and $s>0$, then $\ex(n,S_r,B(\ell,s))=\cN(S_r,F_n)$. Moreover, $H(k,n)$ and $H^*(k,n)$ are the unique extremal graphs in the respective cases.
\end{thm}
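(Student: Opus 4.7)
The plan is to treat parts (i) and (ii) in parallel, exploiting the observation $\ex_r(n,F)=(r!+o(1))\ex(n,S_r,F)$ and the fact that the extremal structures for both functionals coincide. First I verify that $H(k,n)$, $H^*(k,n)$, and $F_n$ are $B(\ell,s)$-free. This reduces to path-length checks: the longest path in $H(k,n)$ (obtained by alternating through the $k$ dominating vertices with fillers from the other side) has $2k+1\le\ell-1$ vertices; the added edge in $H^*(k,n)$ extends this to $2k+2=\ell-1$ for odd $\ell$; and in $F_n$ the longest path has $5$ vertices but its penultimate vertex has degree $2$, avoiding $B(5,s)$ for $s\ge 1$.

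For the upper bound, the central structural lemma states that in any $B(\ell,s)$-free graph $G$ on a large vertex set, the set $T$ of vertices of degree at least some threshold $D=D(\ell,s)$ satisfies $|T|\le k$. The proof is by a greedy alternating-path construction: if $|T|\ge k+1$, one iteratively builds a path $w_0,t_1,w_1,t_2,\dots,t_{k+1},w_{k+1}$ with $t_i\in T$ and $w_i\notin T$, selecting each filler from the large neighborhood of the incoming tall vertex while avoiding the $O(k)$ already-used vertices. Suitable truncation yields a $P_\ell$ with a tall penultimate vertex whose remaining neighbors supply the $s$ broom leaves. With $|T|\le k$ secured, a further path-extension argument controls the non-tall edges: essentially each non-tall vertex sends all of its edges into $T$, with at most one extra non-$T$ edge allowed in the odd case (giving $H^*$), while for $\ell=5,s\ge 1$ a matching between non-tall vertices of degree $2$ is permitted (giving $F_n$). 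Strict convexity of $x^r$ and $\binom{x}{r}$ applied to the degree sequence under these constraints then pins down both the extremum and its uniqueness.

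The main obstacle is the sharpness required by the case split. The gap between the conjectured extremum and the next candidate is only $\Theta(n)$, whereas the main term is $\Theta(n^r)$, so the path arguments must forbid any constant-size surplus of non-$T$ edges (not merely $\Omega(n)$ of them). The split between $H^*$ and $F_n$ hinges on whether the forbidden broom tolerates a degree-$2$ penultimate: $B(5,0)=P_5$ does not (so only one extra edge is allowed, yielding $H^*$), while $B(5,s)$ for $s\ge 1$ does, permitting a whole matching and producing $F_n$. For odd $\ell\ge 7$ the longer target path means many extra non-$T$ edges would reassemble into a $P_\ell$, so again only one is permitted. Executing this dichotomy uniformly within the path-extension framework, and verifying that both the $e_r$ and $\binom{\cdot}{r}$ functionals select the same extremal graph at every threshold, is the delicate part.
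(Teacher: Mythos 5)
Your proposed structural lemma — that in any $B(\ell,s)$-free graph the set $T$ of vertices with degree at least some constant threshold $D(\ell,s)$ has size at most $k$ — is false as stated, and this is the crux of the difficulty. Take a disjoint union of $\lfloor n/(D+1)\rfloor$ stars $S_D$: the longest path has $3$ vertices, so the graph is $B(\ell,s)$-free for every $\ell\ge 5$, yet it has $\Theta(n)$ vertices of degree $D$. The reason your greedy alternating-path construction breaks is that nothing guarantees the ``fillers'' link successive tall vertices: you choose $w_i$ from the large neighborhood of $t_i$, but $w_i$ must also be adjacent to $t_{i+1}$, i.e., $t_i$ and $t_{i+1}$ must share a common neighbor. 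High-degree vertices may lie in entirely different components (or be far apart), and a $B(\ell,s)$-free graph can easily have unboundedly many of them. Bounding $|T|$ is only possible after one knows much more, roughly that most $r$-tuples of vertices have common neighborhoods concentrated on a single $k$-set.

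This is precisely why the paper does not argue about individual high-degree vertices but instead works with the $r$-uniform hypergraphs $\cH_1,\dots,\cH_4$ indexed by how many common neighbors an $r$-set of vertices has. Having many common neighbors is a much stronger ``clustered'' condition than merely having high degree, and it is what makes a greedy path construction through common neighborhoods actually close up (one always has $>k$ candidate common neighbors to pick from, minus the constantly many already on the path). The paper also needs a giant-component argument inside $\cH_3'$ to rule out the disjoint-cluster scenario, a separate Berge-path argument to bound $\cH_2$, and a counting step tying $|E(\cH_4)|$ back to $\cN(S_r,G)$ — none of which is replaceable by your single lemma. Your later steps (convexity, edge-swapping for low-degree vertices, the case split by the parity of $\ell$ and by $s$ for $\ell=5$) are in the right spirit and roughly mirror the end of the paper's proof, but they rest on a foundation that does not hold.
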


We remark that both \cite{llqs} and \cite{wy} obtained reasonably small thresholds on $n$. We do not calculate the threshold, nor make any attempt to optimize it, but it seems likely that our methods would give a very large threshold. 

The case $r=1$ of both $\ex_r(n,B(\ell,s))$ and $\ex(n,S_r,B(\ell,s))$ correspond to the ordinary Tur\'an number $\ex(n,B(\ell,s))$. It was determined for $\ell=4$ and $\ell=5$ in \cite{suwa}. We will only use the simple and well-known bound $\ex(n,B(\ell,s))\le (\ell+s)n$, see e.g. \cite{stein} for a slightly stronger bound that holds for every tree. 

\section{Proof of Theorem \ref{main}}\label{three}

    The lower bounds are obvious. We will prove the upper bounds together. Let $G$ be an extremal graph for $\ex_r(n,B(\ell,s))$ or $\ex(n,S_r,B(\ell,s))$. We will show that $G$ contains $H(k,n)$. We will choose $\gamma,\delta,\varepsilon >0$ such that each of these numbers is small enough with respect to the previous one and to $\ell,s,r$.
    
    We will use only the following corollary of the $B(\ell,s)$-free property: there is no vertex of degree at least $\ell+s$ that is an endpoint of a path on $\ell-1$ vertices in $G$.


    
We look at the $r$-sets in $V(G)$. We will be interested in the number of common neighbors of those $r$ vertices. Let $\cH_1$ denote the $r$-uniform hypergraph that has as hyperedges the  $r$-sets that have more than $k$ common neighbors. Let us fix $\varepsilon>0$ and assume that $n$ is large enough.

 \begin{clm}\label{1.7}
     There are at most $\varepsilon n^r$ hyperedges in $\cH_1$.
 \end{clm}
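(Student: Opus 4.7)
Plan. Proof by contradiction: assume $|\cH_1|>\varepsilon n^r$ and exhibit a copy of $B(\ell,s)$ in $G$. By the corollary of the $B(\ell,s)$-free property recalled just before the claim, it suffices to produce a path on $\ell-1$ vertices in $G$ ending at a vertex of degree at least $\ell+s$.

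The first key step is to reduce to a bounded set of ``hubs''. Pick a constant $\beta=\beta(\varepsilon,\ell,s,r)>0$ and let $U_\beta=\{v\in V(G):d(v)\geq \beta n\}$. The edge bound $\ex(n,B(\ell,s))\leq (\ell+s)n$ gives $\sum_v d(v)\leq 2(\ell+s)n$, hence $|U_\beta|\leq 2(\ell+s)/\beta=:C$, a constant. For $v\notin U_\beta$,
\[
\sum_{v\notin U_\beta}\binom{d(v)}{r}\leq \frac{1}{r!}\sum_{v\notin U_\beta}d(v)^{r}\leq \frac{(\beta n)^{r-1}}{r!}\sum_{v}d(v)\leq \frac{2(\ell+s)\beta^{r-1}}{r!}n^{r},
\]
and choosing $\beta$ small (yet constant in $n$) makes this $\leq \varepsilon n^r/2$. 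Since every $A\in\cH_1$ with a common neighbour outside $U_\beta$ is counted in $\sum_{v\notin U_\beta}\binom{d(v)}{r}$, at least $|\cH_1|-\varepsilon n^r/2>\varepsilon n^r/2$ hyperedges of $\cH_1$ have all common neighbours in $U_\beta$.

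Now pigeonhole on $(k+1)$-subsets of $U_\beta$: for each such $A$ pick a $(k+1)$-subset $W_A\subseteq N(A)\cap U_\beta$; there are only $\binom{C}{k+1}$ choices, so some fixed $W^\ast=\{w_1,\ldots,w_{k+1}\}\subseteq U_\beta$ satisfies $A\subseteq N(W^\ast)$ for at least $\varepsilon n^r/(2\binom{C}{k+1})$ hyperedges $A$. Thus $\binom{|N(W^\ast)|}{r}$ is at least this quantity, forcing $|N(W^\ast)|\geq c^\ast n$ for some constant $c^\ast>0$. Choose distinct $v_1,\ldots,v_{k+1}\in N(W^\ast)$; this is possible because $|N(W^\ast)|\geq c^\ast n\geq k+1$ and $W^\ast\cap N(W^\ast)=\emptyset$ (no self-loops). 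The induced subgraph on $W^\ast\cup\{v_1,\ldots,v_{k+1}\}$ contains $K_{k+1,k+1}$ and hence the Hamiltonian path $v_1 w_1 v_2 w_2\cdots v_{k+1}w_{k+1}$ on $2k+2$ vertices. Since $k=\lfloor(\ell-2)/2\rfloor$, this contains $P_{\ell-1}$ with endpoint $w_{k+1}$: for odd $\ell$, $2k+2=\ell-1$ and the Hamiltonian path itself is $P_{\ell-1}$; for even $\ell$, $2k+2=\ell$ and deleting $v_1$ yields $P_{\ell-1}$ ending at $w_{k+1}$. Finally, $d(w_{k+1})\geq \beta n\geq \ell+s$ for large $n$, so the corollary is violated and $B(\ell,s)\subseteq G$, a contradiction.

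The technical crux is the choice of the constant $\beta$ in the second paragraph: small enough that the low-degree contribution $\sum_{v\notin U_\beta}\binom{d(v)}{r}$ is at most $\varepsilon n^r/2$, but still a fixed constant so that $|U_\beta|\leq C$ does not grow with $n$. After this reduction, the remainder is straightforward pigeonhole on the constantly many $(k+1)$-subsets of $U_\beta$ and the standard Hamiltonian path in $K_{k+1,k+1}$.
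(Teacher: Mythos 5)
Your proof is correct, and it takes a genuinely different route from the paper's. The paper runs an iterative deletion process to reach a subhypergraph $\cH_1'$ where every vertex lies in many hyperedges, then greedily builds a path $u_1u_2\dots u_{2k+1}$ in $G$ by repeatedly stepping into the hypergraph, at each step carefully avoiding already-used vertices and handling the boundary case where the low-degree and high-degree worlds interact; the construction is then grafted onto a shortest path from a high-degree vertex. Your argument instead separates vertices by degree with a single threshold $\beta n$, observes that the linear edge bound forces the set $U_\beta$ of ``hubs'' to have constant size, discards the $o(\varepsilon n^r)$ hyperedges whose common neighbourhood meets the low-degree world, and then pigeonholes on the $\binom{|U_\beta|}{k+1}$ possible $(k+1)$-subsets of hubs that can serve as common neighbours. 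This yields a single $(k+1)$-set $W^\ast$ of high-degree vertices with common neighbourhood of linear size, hence a $K_{k+1,k+1}$, and the required $P_{\ell-1}$ ending at a hub falls out immediately; no incremental path-building or vertex-avoidance bookkeeping is needed. Your version is shorter and conceptually cleaner for this claim; the paper's more hands-on path-building, however, is structured so that the same machinery can be reused almost verbatim in the later claim identifying the $k$-set $B$ (the proof there explicitly says ``We will build a path similarly to the proof of Claim~\ref{1.7}''), which is presumably why the author chose that route. One small point worth making explicit: you invoke $W^\ast \cap N(W^\ast)=\emptyset$ to guarantee the $2k+2$ path vertices are distinct, and this does hold since any $w\in W^\ast$ in the common neighbourhood would have to be adjacent to itself.
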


 \begin{proof}[Proof of Claim]
     Let us delete vertices that are contained in at most $\varepsilon n^{r-1}$ hyperedges in $\cH_1$. Then we repeat this for the resulting hypergraph, and so on. After some steps, we either have deleted all the vertices (in which case we deleted $n$ times at most $\varepsilon n^{r-1}$ hyperedges and we are done), or we arrived to an $m$-vertex hypergraph $\cH_1'$ where each vertex is in more than $\varepsilon n^{r-1}$ hyperedges.


Let $d=\lfloor c_0\varepsilon^{1/(r-1)}n \rfloor$ and let $A$ be the set of vertices in $G$ with degree more than $d$, where we pick $c_0>0$ sufficiently small. Recall that 
the $B(\ell,s)$-free property implies that $G$ has at most $(\ell+s)n$ edges. This implies that $|A|\le 2(s+\ell)/c_0\varepsilon^{1/(r-1)}$.

     Let $B=V(\cH_1')$ and $C$ be the set of vertices that are common neighbors of at least one hyperedge of $\cH_1'$. Let us consider a shortest path $P$ between a vertex $v$ of $G$ with degree at least $\ell+s$ and a vertex $u_1\in B\cup C$. Note that this might be a path of length 0 if there is a high degree vertex in $B\cup C$. Let $N(u)$ denote the set of vertices that are contained in a hyperedge of $\cH_1'$ that also contains $u$. If $u\in B$, then there are at least $\varepsilon n^{r-1}$ hyperedges containing $u$, thus there are at least $\varepsilon' n$ vertices in $N(u)$ for some constant $\varepsilon'>0$. We let $N'(u)=N(u)\setminus A$, then $|N'(u)|\ge \varepsilon'' n$ for some constant $\varepsilon''>0$.
     
     We build a path $P'$ in $G$ gradually. We start with $P$. Then we pick a vertex in $N'(u_1)$ to be the vertex $u_3$ that comes after the next vertex in the path. We pick a vertex from $N'(u_1)$ that is not on the path so far, this can be done greedily. We will later give more details on how we pick $u_3$. We pick a common neighbor of $u_1$ and $u_3$ as $u_2$. We have at least $k+1$ choices by the definition of $\cH_1$. We want to pick one that is not on the path $P'$ yet. We will continue this till we find $u_{2k+1}$. 

     If $u_1=v$, we look at the vertices that are common neighbors of a hyperedge containing $u_1$. If there is one not in $A$, we pick that as $u_2$ and we pick as $u_3$ an arbitrary vertex that is in a hyperedge with common neighbor $u_2$, containing $u_1$.
     
     Let us consider now the step when we have found $u_{2i-1}$ and we want to pick $u_{2i+1}$ and $u_{2i}$. We look at each vertex $u \in N'(u_{2i-1})$ and the common neighbors of $u_{2i-1}$ and $u$. Note that the vertices of $P$ are not in $B\cup C$, thus cannot be picked as $u_{2i}$ or $u_{2i+1}$.  Each vertex $u_{2j+1}$ with $j\ge 1$ is not in $A$. Note that $u_{2j+1}$ may be a common neighbor of a hyperedge in $\cH_1'$ that contains $u_{2i-1}$, but there are at most $\binom{c_0\varepsilon^{1/(r-1)}n}{r-1}\le c_1\varepsilon n^{r-1}$ such hyperedges. The same holds for $u_2$ if $u_2\not\in A$.
     
     Recall that there are at least $\varepsilon n^{r-1}$ hyperedges in $\cH_1'$ that contain $u_{2i-1}$. If $c_0$ is sufficiently small, then we have at least $(\ell+s-1)\varepsilon n^{r-1}/(\ell+s)$ hyperedges in $\cH_1'$ that contain $u_{2i-1}$ and the common neighbor is not $u_{2j+1}$. Applying this for every $j$ and also for $u_2$ if $u_2\not\in A$, we obtain at least $c_2\varepsilon n^{r-1}$ hyperedges that contain $u_{2i-1}$ and the common neighborhood is disjoint from $u_3,\dots,u_{2i-3}$, and also from $u_2$ if $u_2\not\in A$, for some constant $c_2>0$. Therefore, we have to pick a common neighbor that avoids $u_1$ and $u_2,\dots,u_{2i-2}$. This can be done greedily as long as $i\le k$. If $u_2\not\in A$, then this can be done for $i=k+1$ as well.
     
     This way we found a path $u_1u_2\dots u_{2k}u_{2k+1}$ in $G$ that shares only $u_1$ with $P$, thus can be extended to a path with first vertex $v$. If this path has at least $\ell-1$ vertices, we obtain a contradiction. Otherwise $\ell$ is odd and $v=u_1$. If $u_2\not\in A$, we can continue the path $P'$ to $u_{2k+3}$ and again $v$ is the endvertex of a path of length at least $\ell-1$.
     

Finally, assume that the common neighborhood of every hyperedge containing $u_1$ consists of vertices in $A$. There are at least $k+1$ such vertices, and none of them is among $u_3,\dots,u_{2k+1}$. Then at most $k$ of them are on $P'$, thus we can add one of them as $u_0$ to $P'$. This way we found a path of length $\ell-1$ with first vertex $u_0\in A$, thus of degree more than $\ell+s$, a contradiction.
\end{proof}

Let us return to the proof of the Theorem. Let $\cH_2$ denote the subhypergraph of $\cH_1$ where an $r$-set is a hyperedge of $\cH_2$ if its vertices have more than $\ell+s$ common neighbors.

\begin{clm}
   There is a constant $c=c(\ell,s)$ such that $\cH_2$ has at most $cn$ hyperedges.
\end{clm}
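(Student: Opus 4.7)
The plan is to run the same path-building argument from Claim 1 but at a constant density scale. Assume for contradiction that $|\cH_2| > cn$ for a sufficiently large constant $c = c(\ell, s, r)$, and choose another constant $c_1$ with $c_1 \ll c$, larger than any constant in $\ell, s, r$ we will need below. Iteratively delete from $\cH_2$ any vertex lying in at most $c_1$ hyperedges: each step removes at most $c_1$ hyperedges, for a total of at most $c_1 n < cn$. The resulting subhypergraph $\cH_2'$ is therefore non-empty, and every vertex of $V(\cH_2')$ lies in more than $c_1$ hyperedges of $\cH_2'$.

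The key observation is that every $v \in V(\cH_2')$ satisfies $\deg_G(v) > \ell + s$, because the more than $\ell + s$ common neighbors of any hyperedge of $\cH_2$ containing $v$ are all neighbors of $v$. By the corollary of the $B(\ell, s)$-free property stated earlier, no such $v$ can be the endpoint of a $P_{\ell - 1}$ in $G$; we will derive a contradiction by producing precisely such a path.

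Fix any $u_1 \in V(\cH_2')$ and build a path $u_1 u_2 \cdots u_{2k+1}$ in $G$ by the alternating construction of Claim 1: the odd-indexed $u_{2i+1}$ is chosen in $N(u_{2i-1})$ (vertices sharing a $\cH_2'$-hyperedge with $u_{2i-1}$), and the even-indexed $u_{2i}$ is picked from the common neighborhood of such a hyperedge, both off the current path. Feasibility is easier to check here than in Claim 1. First, since $u_{2i-1}$ lies in more than $c_1$ distinct hyperedges, their $(r-1)$-completions are more than $c_1$ distinct subsets whose union is $N(u_{2i-1})$, so Kruskal--Katona gives $|N(u_{2i-1})| = \Omega(c_1^{1/(r-1)})$, which for $c_1$ large enough exceeds the path length. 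Second, any $\cH_2'$-hyperedge has more than $\ell + s$ common neighbors while the path has at most $\ell - 1$ vertices, so an off-path common neighbor always exists; in particular we do not need the exceptional set $A$ of high-degree vertices used in Claim 1, since the slack of at least $s+2$ in each common neighborhood automatically absorbs any previously selected path vertex that happens to lie there.

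If $\ell$ is even then $2k + 1 = \ell - 1$ and the path already is a $P_{\ell - 1}$ ending at $u_1$, contradicting $B(\ell, s)$-freeness since $\deg_G(u_1) > \ell + s$. If $\ell$ is odd, $2k + 1 = \ell - 2$, and we extend the path by appending a neighbor $u_{2k+2}$ of $u_{2k+1}$ off the path; this is possible because $u_{2k+1} \in V(\cH_2')$ has degree greater than $\ell + s$, far larger than the path length. The resulting $P_{\ell - 1}$ ends at $u_1$ of degree greater than $\ell + s$, yielding the same contradiction. The only mild technical obstacle is to choose the constants $c_1, c$ so that the Kruskal--Katona estimate furnishes enough choices at every step of the path construction; otherwise the argument is a simplification of Claim 1, since the larger common neighborhoods in $\cH_2$ obviate the finer bookkeeping with the set $A$.
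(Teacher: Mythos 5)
Your proof is correct, and it takes a genuinely different route from the paper's. The paper establishes that $\cH_2$ contains no Berge path with $k+1$ hyperedges (by converting such a Berge path into a $P_{\ell-1}$ in $G$ with a high-degree endpoint), and then invokes the hypergraph Erd\H{o}s--Gallai theorem of Gy\H{o}ri--Katona--Lemons / Davoodi--Gy\H{o}ri--Methuku--Tompkins to conclude $|\cH_2|=O(n)$. You instead re-derive the linear bound from scratch: iteratively delete vertices of hypergraph degree $\le c_1$ (removing at most $c_1 n$ hyperedges total), and if anything survives, greedily grow a $P_{\ell-1}$ in $G$ inside the residual hypergraph $\cH_2'$, exploiting that every surviving vertex has $G$-degree $>\ell+s$ and that each common neighborhood has $>\ell+s$ elements, so there is always a fresh vertex for both the odd and even path positions. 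This is essentially the standard min-degree-deletion proof of the hypergraph Erd\H{o}s--Gallai bound specialized to this setting, so it is more self-contained (no external citation needed), and it correctly notes a genuine simplification over Claim~2.1: because common neighborhoods here are of size $>\ell+s$ rather than merely $>k$, the high-degree exceptional set $A$ is unnecessary. The only cosmetic remark is that the ``Kruskal--Katona'' invocation is overkill — the inequality $\binom{|N(u_{2i-1})|}{r-1}> c_1$ is all that is used — and the constant should more accurately be written $c(\ell,s,r)$, though $r$ is fixed throughout. Both arguments reach the same conclusion; the paper's is shorter on the page at the cost of a reference, yours is longer but elementary.
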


\begin{proof}[Proof of Claim]
    We show that $\cH_2$ does not contain a Berge path of length $k+1$. Such a subhypergraph would consist of $k+1$ hyperedges $h_1,\dots,h_{k+1}$ and $k+2$ vertices $v_1,\dots,v_{k+2}$ such that $h_i$ contains $v_i$ and $v_{i+1}$ for every $i\le k+1$. If such a Berge path exists in $\cH_2$, then for each $h_i$ we can pick a vertex $u_i$ that is a common neighbor of the vertices in $h_i$ and is different from the vertices of the Berge path and the vertices $u_j$ picked earlier. This is doable since $u_i$ has to be distinct from at most $2k+2\le \ell+s$ vertices. This way we found a path $v_1u_1v_2u_2\dots v_{k+1}u_{k+1}v_{k+2}$ of length at least $\ell-1$ with endvertices of degree more than $\ell+s$, a contradiction.

    It is well-known that an $r$-uniform hypergraph without a Berge path of a given length has at most $cn$ hyperedges, see \cite{gykl,dgymt} for exact results.
\end{proof}

Let us return to the proof of the Theorem. Let $\cH_3$ (resp. $\cH_4$) denote the hypergraph which has as hyperedges the $r$-sets with common neighborhood of order exactly $k$ (resp. less than $k$). 

\begin{clm}
    $\cH_4$ has at most $\delta\binom{n}{r}$ hyperedges.
\end{clm}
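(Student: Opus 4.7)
The plan is to use the extremality of $G$ together with the double-counting identity
$$\cN(S_r,G)=\sum_R c(R),$$
where $R$ ranges over the $r$-subsets of $V(G)$ and $c(R):=|\bigcap_{v\in R}N(v)|$ (obtained by counting pairs $(v,R)$ with $R\subseteq N(v)$ in two ways). Split the right-hand side along the partition $\{\cH_3,\cH_4,\cH_1\setminus\cH_2,\cH_2\}$ of the $r$-subsets of $V(G)$ and apply the bounds $c(R)=k$ on $\cH_3$, $c(R)\le k-1$ on $\cH_4$, $c(R)\le\ell+s$ on $\cH_1\setminus\cH_2$, and $c(R)\le n$ on $\cH_2$, together with $|\cH_1|\le\varepsilon n^r$ from Claim 1.7 and $|\cH_2|\le cn$ from the previous claim. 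Since $|\cH_3|+|\cH_4|\le\binom{n}{r}$, this yields
$$\cN(S_r,G)\le k\binom{n}{r}-|\cH_4|+(\ell+s)\varepsilon n^r+cn^2.$$

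For the matching lower bound, the $\binom{n-k}{r}$ $r$-subsets contained in the independent part of $H(k,n)$ each have exactly $k$ common neighbors, so $\cN(S_r,H(k,n))\ge k\binom{n-k}{r}=k\binom{n}{r}-O(n^{r-1})$. In the star-counting case $\cN(S_r,G)\ge\cN(S_r,H(k,n))$ by extremality, so combining with the previous inequality gives
$$|\cH_4|\le(\ell+s)\varepsilon n^r+cn^2+O(n^{r-1}),$$
which is below $\delta\binom{n}{r}$ provided $\varepsilon$ was chosen small enough relative to $\delta$ and $n$ is sufficiently large.

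For the $e_r$ case the same conclusion follows after absorbing a lower-order correction. Expanding $d^r=\sum_{j=1}^r S(r,j)j!\binom{d}{j}$ via Stirling numbers gives $e_r(G)=r!\cN(S_r,G)+\sum_{j=1}^{r-1}S(r,j)j!\cN(S_j,G)$. Since $G$ has at most $(\ell+s)n$ edges, $\sum_v d_v^{j}\le(n-1)^{j-1}\cdot 2(\ell+s)n=O(n^j)$, hence $\cN(S_j,G)=O(n^j)$ for each $j\le r-1$ and the correction term is $O(n^{r-1})$; the same expansion holds for $H(k,n)$. Thus $e_r(G)\ge e_r(H(k,n))$ still implies $\cN(S_r,G)\ge k\binom{n}{r}-O(n^{r-1})$, and the argument above concludes. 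No serious obstacle is expected: all the technical content already sits in Claims 1.7 and 2, and the present claim is essentially a bookkeeping corollary in which $|\cH_4|$ is pinned from above by the small slack in the extremality inequality $\cN(S_r,G)\ge\cN(S_r,H(k,n))$.
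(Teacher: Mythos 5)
Your double-counting identity and the resulting inequality
\[
\cN(S_r,G)\le k\binom{n}{r}-|\cH_4|+(\ell+s)\varepsilon n^r+cn^2
\]
coincide with the paper's, and the $e_r$ reduction via $e_r(G)=r!\,\cN(S_r,G)+O(n^{r-1})$ is also sound. The gap is in the final step, specifically for $r=2$. You conclude that $(\ell+s)\varepsilon n^r+cn^2+O(n^{r-1})$ is below $\delta\binom{n}{r}$ ``provided $\varepsilon$ was chosen small enough relative to $\delta$.'' But the constant $c=c(\ell,s)$ coming from the Berge-path bound on $|\cH_2|$ is fixed and does not shrink with $\varepsilon$, while $\delta$ is chosen small with respect to $\ell,s,r$. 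For $r>2$ the term $cn^2$ is of lower order than $\binom{n}{r}$, so there is no problem; but for $r=2$ the term $cn^2$ is of the same order as $\delta\binom{n}{2}\approx\delta n^2/2$ and can easily dominate it. Your argument therefore does not close the $r=2$ case.

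The paper handles this explicitly: it shows that one may replace the $cn^2$ bound on the $\cH_2$-contribution by $\varepsilon n^2$. The point is that if this fails, then at least $\varepsilon n/2$ pairs have at least $\varepsilon n/(2c)$ common neighbors; these pairs touch $\Omega(\sqrt{n})$ vertices, each of linear degree, forcing $\Omega(n^{3/2})$ edges and contradicting $|E(G)|\le \ex(n,B(\ell,s))=O(n)$. To repair your proof you need to incorporate this (or an equivalent) refinement of the $\cH_2$-contribution bound in the $r=2$ case, for both the $\cN(S_r,G)$ and the $e_r(G)$ versions of the argument.
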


\begin{proof}[Proof of Claim]
    Let us count the copies of $S_r$ by picking an $r$-set, i.e., a hyperedge as the set of leaves, and a vertex in the common neighborhood as the center. The hyperedges in $\cH_2$ contribute at most $cn^2$, since we can pick the hyperedge at most $cn$ ways and the center at most $n$ ways. The other hyperedges in $\cH_1$ contribute at most $\varepsilon (\ell+s) n^r$. The hyperedges in $\cH_4$ contribute at most $(k-1)|E(\cH_4)|$, while the hyperedges in $\cH_3$ contribute at most $k|E(\cH_3)|$. This means that $\cN(S_r,G)\le cn^2+\varepsilon (\ell+s) n^r +k\binom{n}{r}-|E(\cH_4)|$. If $r>2$ and $\varepsilon$ is sufficiently small, then $cn^2+\varepsilon (\ell+s) n^r<\delta \binom{n}{r}-k\binom{n-1}{r-1}$, thus either $|E(\cH_4)|\le \delta\binom{n}{r}$, or $\cN(S_r,G)<k\binom{n}{r}-k\binom{n-1}{r-1}=k\binom{n-1}{r}\le\cN(S_r,H(k,n))$, a contradiction.

    In the case $r=2$ our upper bound $cn^2$ on the contribution of $\cH_2$ is not negligible. However, we can obtain the upper bound $\varepsilon n^2$ if $\cH_2$ has at most $\varepsilon n$ hyperedges or if all but $\varepsilon n/2$ hyperedges in $\cH_2$ have at most $\varepsilon n/2c$ vertices in the common neighborhood. If none of these assumptions hold, then we have at least $\varepsilon n/2$ pairs in $G$ that have at least $\varepsilon n/2c$ vertices in the common neighborhood.
 At least $\Omega(\sqrt{n})$ vertices belong to at least one such pair, and each of those vertices have linear degree in $G$. Therefore, $G$ has $\Omega(n^{3/2})$ edges, a contradiction to $|E(G)|\le \ex(n,B(\ell,s))=O(n)$.

    It is easy to see that the same argument works if we deal with $e_r(G)$. Indeed, we obtain $e_r(G)\le (r!+o(1))\left( cn^2+\varepsilon (\ell+s) n^r +k\binom{n}{r}-|E(\cH_4)|\right)$. Here we use the better bound $cn^2+\varepsilon (\ell+s) n^r<\delta \binom{n}{r}/2-k\binom{n-1}{r-1}$.
    Then we have either $|E(\cH_4)|\le \delta\binom{n}{r}$, or $e_r(G)<(r!+o(1))\left(k\binom{n-1}{r}-\delta\binom{n}{r}/2\right)\le e_r(H(k,n))$, a contradiction.
\end{proof}

We obtained that there are at least $(1-\varepsilon-\delta)\binom{n}{r}$ hyperedges in $\cH_3$. 

\begin{clm}
    There is a $k$-set $B$ such that there are at least $(1-\gamma)\binom{n}{r}$ $r$-sets with common neighborhood $B$.
\end{clm}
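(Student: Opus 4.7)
My plan is to identify $B$ by double counting the $(r-1)$-subsets of hyperedges of $\cH_3$. The key observation is: if $S$ is an $(r-1)$-set with $|N(S)|=k$, then for any $v$ with $S\cup\{v\}\in\cH_3$ one has $B(S\cup\{v\})=N(S)\cap N(v)\subseteq N(S)$, and as both sides have size $k$ they coincide. Hence every $h\in\cH_3$ containing such an $S$ shares the same common neighborhood $N(S)$.

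As a preliminary step I would establish the $(r-1)$-uniform analog of Claim \ref{1.7}: the number of $(r-1)$-sets $S$ with $|N(S)|>k$ is at most $\varepsilon^*n^{r-1}$ for some small $\varepsilon^*$. For $r\ge 3$ the path-building construction used in Claim \ref{1.7} carries over with only cosmetic changes, since its only use of the uniformity was that any two vertices in a common hyperedge share more than $k$ common neighbors. For $r=2$ the analog becomes a statement about vertices of degree greater than $k$ and is false in general, so a separate argument using the density of $\cH_3$ directly is needed; I expect this boundary case to be the main obstacle.

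Let $\cT$ denote the family of $(r-1)$-sets $S$ with $|N(S)|=k$. For any $h\in\cH_3$ and $(r-1)$-subset $S$ of $h$ one has $|N(S)|\ge|N(h)|=k$, so $S\in\cT$ or $|N(S)|>k$. By the preliminary step, the number of pairs $(S,h)$ with $S\subset h\in\cH_3$ and $S\notin\cT$ is at most $\varepsilon^* n^{r-1}\cdot n=\varepsilon^* n^r$, so
\[
\sum_{S\in\cT}\bigl|\{v:S\cup\{v\}\in\cH_3\}\bigr| \;\ge\; r|\cH_3|-\varepsilon^* n^r.
\]
For each $S\in\cT$, writing $B_S:=N(S)$, the vertices $v$ with $S\cup\{v\}\in\cH_3$ are exactly the elements of $N(B_S)\setminus S$, so the summand is at most $|N(B_S)|$. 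Since $|\cT|\le\binom{n}{r-1}$, averaging yields some $S\in\cT$ with $|N(B_S)|\ge(1-\gamma'')n$ for a small $\gamma''$.

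Setting $B:=B_S$ completes the proof. Any $r$-subset $h$ of $N(B)$ has $B\subseteq N(h)$, hence $h\notin\cH_4$, and $h\in\cH_3$ forces $N(h)=B$ since both have size $k$. Therefore the number of $r$-subsets of $N(B)$ lying in $\cH_3$ is at least $\binom{|N(B)|}{r}-|\cH_1|\ge(1-r\gamma'')\binom{n}{r}-\varepsilon n^r$, which exceeds $(1-\gamma)\binom{n}{r}$ once $\gamma$ is chosen sufficiently large compared to $\gamma''$, $\delta$, and $\varepsilon$, and each such $h$ has $B(h)=B$.
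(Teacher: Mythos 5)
Your approach differs substantially from the paper's. The paper reuses the path-building machinery of Claim~\ref{1.7} applied to $\cH_3'$: it constructs a long path $u_1u_2\dots u_{2k+1}$ whose even-indexed vertices are common neighbors of consecutive hyperedges, and derives a contradiction if the hyperedges along a Berge path have two distinct common neighborhoods. Your route instead isolates the clean observation that if $|N(S)|=k$ for an $(r-1)$-set $S$, then every $h\in\cH_3$ with $S\subset h$ has $N(h)=N(S)$, and then finds the set $B$ by averaging over such $S$. For $r\ge 3$ this is arguably shorter and cleaner, and the final counting steps are correct: the passage from $r|\cH_3|-\varepsilon^*n^r$ to an $S$ with $|N(B_S)|\ge(1-\gamma'')n$, and from there to $(1-\gamma)\binom{n}{r}$ via the bound $|\cH_1|\le\varepsilon n^r$, all check out. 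Also, the application of the Claim~\ref{1.7} argument to the $(r-1)$-uniform hypergraph does carry over verbatim when $r-1\ge 2$.

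However, there is a genuine gap at $r=2$, and you have identified it yourself without closing it. Your preliminary step would require that the number of vertices of degree greater than $k$ is $O(\varepsilon n)$, and nothing established so far gives this: a $B(\ell,s)$-free graph can have $\Theta(n)$ vertices of degree in $(k,\ell+s)$, and Claim~\ref{1.7} has no $1$-uniform analog. One would have to exploit the density of $\cH_3$ itself (e.g.\ that $\sum_x\binom{d(x)}{2}\approx k\binom{n}{2}$ while $\sum_x d(x)=O(n)$, forcing the degree-square sum to concentrate on $O(1)$ vertices), but this is a nontrivial argument and you do not supply it. Since the theorem is claimed for all $r\ge 2$ and $r=2$ is the sum-of-squares case that motivated the problem, the proof as written does not establish the claim. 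The paper's path-based proof treats $r=2$ uniformly with the other cases and avoids this issue.
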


\begin{proof}[Proof of Claim]
    Let us first delete the hyperedges that have a vertex of degree less than $\ell+s$ in their common neighborhood. Such a vertex is in the common neighborhood of at most $\binom{\ell+s}{r}$ hyperedges, thus we deleted at most $\binom{\ell+s}{r}n$ hyperedges altogether, let $\cH_3'$ be the resulting hypergraph.

Let us now delete every vertex that is in less than $\varepsilon n^{r-1}$ hyperedges of $\cH_3'$. We repeat this until we obtain $U$ where every vertex is in at least  $\varepsilon n^{r-1}$
hyperedges. Clearly $U$ is not empty, since we delete at most $\varepsilon n^r$ hyperedges. 

We claim that $U$ contains a connected component of order at least $n-\gamma n/r^2$. Indeed, 
if there is no connected component in $U$ of order at least $n-\gamma n/r^2$, then the hyperedges intersecting the largest component and some other component are missing, and there are at least $c'\gamma{^2} n^r$ such hyperedges for some constant $c'$. This is impossible if $\varepsilon$ and $\delta$ are small enough, thus there is a component with vertex set $U'$ of order at least $n-\gamma n/r^2$. 

We will build a path similarly to the proof of Claim \ref{1.7}. Recall that $A$ denotes the set of vertices with degree more than $d=\lfloor c_0\varepsilon^{1/(r-1)}n \rfloor$ in $G$. Let $N''(u)$ is the set of vertices not in $A$ that are contained in a hyperedge of $\cH_3'$ together with $u$. Note that this is analogous to the definition of $N'(u)$, but $\cH_1'$ is replaced by $\cH_3'$. We have that $|N''(u)|\ge \varepsilon'' n$, analogously to the proof of Claim \ref{1.7}.

We pick a vertex $u_1\not\in A$. Let us consider the step when we have found $u_{2i-1}$ and want to pick the next two vertices. We look at each vertex $u\in N''(u_{2i-1})$ and the common neighbors of a hyperedge of $\cH_3'$ containing $u$ and $u_{2i-1}$. As in Claim \ref{1.7}, we eventually obtain that we can pick a hyperedge $h_{i}$ of $\cH_3'$ such that the common neighborhood of $h_{i}$ is in $A$. Thus we have to pick a vertex from the common neighborhood that is not $u_2,u_4,\dots,u_{2i-2}$. This is doable as long as $i\le k$. This way we found $u_{2k+1}$. 

Assume first that there is a hyperedge $h_{k+1}$ in $\cH_3'$ that contains $u_{2k+1}$ and a vertex $u_{2k+3}$ that has not been picked earlier and the common neighborhood of $h_{2k+1}$ is distinct from the common neighborhood of $h_j$ for some $j\le 2k-1$. Then we have $x\in h_{j}\setminus h_{k+1}$. If $x$ was never picked, we change the earlier pick and pick $x$ for $h_j$. This way there is an unused vertex in the common neighborhood of $h_{k+1}$ that has not been used. We pick that and we obtain a path of length at least $\ell-1$ in $G$ with endpoint $u_2$, that is the common neighbor of a hyperedge in $\cH_3'$, hence $u_2$ has degree at least $\ell+s$, a contradiction.

Assume now that there is no such hyperedge $h_{k+1}$. In particular this implies that each $h_i$ with $i\le k$ has the same common neighborhood $B$. Let us now delete each hyperedge that contains a vertex of $B$ and let $\cH_3''$ be the resulting hypergraph. Clearly we deleted at most $k\binom{n}{r-1}$ hyperedges. Assume that there is a hyperedge $h$ of $\cH_3''$ with common neighborhood $C$ such that $C\neq B$. Then there is a Berge path starting with $h_1$ and ending in a vertex of $h$. On that Berge path there are two consecutive hyperedges such that one of them has common neighborhood $B$, while the other not. We can assume without loss of generality that the second one is $h$, and it shares a vertex $u$ with a hyperedge $h'$ of $\cH_3'$ such that the common neighborhood of $h'$ is $B$. 

There is a vertex $u'$ of $h$ that is not among the vertices $u_1,u_3,\dots,u_{2k+1}$, otherwise $B$ would be contained in the common neighborhood of $h$. It is possible that $u=u_{2i+1}$ for some $i$. Consider the path $u_1u_2\dots u_{2k}$. If $u$ is among these vertices, then we replace $u$ with $u_{2k+1}$. This is doable since $u_{2k+1}$ in $B$, which is the common neighborhood of the hyperedge containing $u_{2i}$ and $u_{2i+2}$, thus $u_{2k+1}$ is adjavcent to both $u_{2i}$ and $u_{2i+2}$. After that, we add $u$, $x$ and $u'$ to the end of this path, where $x\in C\setminus B$. This way we obtained a path of length $\ell-1$ with an endvertex of degree at least $\ell+s$, a contradiction.

We obtained that $B$ is the common neighborhood of every hyperedge in $\cH_3''$. Recall that there are at least $(1-\varepsilon-\delta)\binom{n}{r}$ hyperedges in $\cH_3$. We obtained $\cH_3''$ by deleting at most $\binom{\ell+s}{r}n+\varepsilon n^r+\gamma n\binom{n}{r-1}/r^2+k\binom{n}{r-1}$ hyperedges, thus we are done if $\varepsilon$ and $\delta$ are small enough.
\end{proof}

Let us return to the proof of the Theorem. Clearly there are at least $(1-\gamma)n$ vertices in the common neighborhood $W$ of $B$, thus we obtained a copy of $K_{k,n-\gamma n}$ in $G$.We claim that any vertex $v$ outside $B$ has degree at most $\gamma^{1/r}n$. Indeed, no $r$-sets in the neighborhood of $v$ belongs to $\cH_3''$, thus $\binom{d(v)}{r}\le \gamma\binom{n}{r}$.

Assume that a vertex $v'$ has degree at most $\ell+s$ and is outside $B$. We claim that $v'$ is adjacent to every vertex of $B$. Indeed, otherwise we delete each edge incident to $v'$ and add the edges between $v'$ and $B$. The resulting graph $G'$ is $B(\ell,s)$-free, since $v'$ could be replaced by any common neighbor of $B$ in a copy of $B(\ell,s)$. We erased at most $\binom{\ell+s}{r}$ copies of $S_r$ with center $v'$ and at most $(\ell+s)\binom{\gamma^{1/r}n}{r-1}$ copies of $S_r$ where the center is a neighbor of $v'$. We added $\binom{n-\gamma n}{r-1}$ new copies of $S_r$, where the center is the new neighbor of $v'$ in $B$. Therefore, $G'$ contains more copies of $S_r$ than $G$. Analogous calculation shows that $e_r(G')>e_r(G)$, a contradiction.

Let $G_0$ denote the connected component of $G$ containing $B\cup W$ and $m=|V(G_0)|$.

\begin{clm}\label{struc} If $\ell$ is even, then
    $G_0$ is a subgraph of $H(k,m)$. If $\ell\ge 7$ is odd, then $G_0$ is a subgraph of $H^*(k,m)$. If $\ell=5$, then $G_0$ is a subgraph of $F_m$. If $\ell=5$ and $s=0$, then $G_0$ is a subgraph of $H^*(k,m)$.
\end{clm}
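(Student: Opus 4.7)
The plan is to prove the claim in two stages: first establish $V(G_0) = B \cup W$, then analyze the edges inside $W$ by a case split on $\ell$ and $s$.

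For the first stage, let $S := V(G_0) \setminus (B \cup W)$ and suppose toward contradiction that $v \in S$. The discussion preceding the claim shows that $\deg(v) > \ell+s$, since any non-$B$ vertex of degree at most $\ell+s$ must lie in $W$. Using that every vertex of $W$ is adjacent to every vertex of $B$, I would build alternating $B$--$W$ paths on $\ell-1$ vertices with a high-degree endpoint whenever $v$ is adjacent to $B \cup W$. Concretely, if $v$ has a neighbor $w \in W$, the path
\[
v\; w\; b_1\; x_1\; b_2\; x_2\; \cdots\; b_k
\]
(extended by one further $x_k \in W$ when $\ell$ is odd) ends at the high-degree vertex $v$; if $v$ has a $B$-neighbor $b$ but no $W$-neighbor, then since $\deg(v) > \ell+s > k = |B|$ some further neighbor $u$ of $v$ lies in $S$, and the path $u\; v\; b\; x_1\; b_2\; \cdots$ instead has the high-degree endpoint $u \in S$. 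A vertex of $S$ not adjacent to $B \cup W$ at all reduces to these two cases by examining the last $S$-vertex on a shortest path from $v$ to $B \cup W$ in $G_0$. Hence $S = \emptyset$.

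With $V(G_0) = B \cup W$ in hand, every edge of $G_0$ not contained in $H(k,m)$ is an \emph{extra} edge inside $W$. If $\ell$ is even, a single extra $ww'$ already yields the $(\ell-1)$-path $b_1\, x_1\, b_2\, \cdots\, b_k\, w\, w'$ with high-degree endpoint $b_1$, so no extras exist and $G_0 \subseteq H(k,m)$. If $\ell \ge 7$ is odd, I would rule out any two extras simultaneously: if they share a vertex, concatenate $b_1\, x_1\, \cdots\, b_k$ with $w_1\, w_2\, w_3$; if they are disjoint, use $b_1\, w_1\, w_2\, b_2\, x_2\, \cdots\, b_k\, w_3\, w_4$ (which uses $k \ge 2$, valid since $\ell \ge 7$). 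Both paths have $\ell-1$ vertices and $b_1$ as a high-degree endpoint, so at most one extra is permitted and $G_0 \subseteq H^*(k,m)$.

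The case $\ell = 5$ (so $k = 1$) is analogous but tighter. Two extras sharing a vertex give the $4$-path $b\, w_1\, w_2\, w_3$ ending at the high-degree vertex $b$, so the extras must form a matching in $W$ for every $s$. For $s = 0$, two disjoint extras $w_1 w_2$ and $w_3 w_4$ additionally give the $5$-path $w_1\, w_2\, b\, w_3\, w_4$, which is a copy of $B(5,0)=P_5$ in $G$, leaving at most one extra and $G_0 \subseteq H^*(1,m)$. For $s \ge 1$ this last obstruction disappears, because the penultimate vertex of that $5$-path lies in $W$ and has degree only $2$, too few to support $s \ge 1$ broom leaves; thus any matching of extras is allowed and $G_0 \subseteq F_m$. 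The most delicate point is precisely the $\ell = 5$, $s = 0$ step, where the simplified broom-free corollary from the introduction is not quite enough to exclude two disjoint extras (every 4-vertex path they produce has low-degree endpoints), and one must appeal to the full $P_5$-free property directly.
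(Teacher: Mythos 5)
Your proof is correct and follows essentially the same approach as the paper: build alternating $B$--$W$ paths to contradict the corollary of $B(\ell,s)$-freeness, and case-split on the parity of $\ell$ and on whether the offending extra edges share a vertex, with the same special handling of $\ell=5$, $s=0$ via the $P_5$ itself rather than the corollary. The only structural difference is that you first prove $V(G_0)=B\cup W$ so the subsequent path-building stays entirely inside $B\cup W$, whereas the paper absorbs any $S$-vertices implicitly by routing a shortest path from the offending edge to $B$; this is a cleaner presentation of the same idea, not a genuinely different route.
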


\begin{proof}[Proof of Claim]
    Observe that the statement is equivalent to the following. If $\ell$ is even, then there is no edge outside $B$, if $\ell\ge 7$ is odd or $\ell=5$ and $s=0$, then there is at most one edge outside $B$, and if $\ell=5$ and $s>0$, then there is a matching outside $B$.

 Consider first the case $\ell$ is even and assume that there is an edge $uv$ outside $B$. Then there is a shortest path from this edge to $B$, and then we can continue this path by alternating between $B$ and $W$. This is a path on at least $2k+2=\ell$ vertices with the penultimate vertex in $B$, thus of degree at least $\ell+s$, a contradiction.

Consider now the case $\ell$ is odd. Assume that there are two edges outside $B$ of the form $uv$ and $uv'$. Then we consider a shortest path $P$ between these vertices and $B$. If $P$ starts at $v$, we can extend $P$ in one direction by $u$ and $v'$, and in the other direction by an alternating path between $B$ and $W$. This path has $2k+3$ vertices and the penultimate vertex is in $B$, a contradiction.
The case $P$ starts at $v'$ is analogous. If $P$ starts at $u$, assume first that the degree of $v$ is more than $\ell+s$. Then we can pick a neighbor $u'$ of $v$ that is not in $B$ and extend $P$ with $v$ and $u'$ in one direction and an alternating path between $B$ and $W$ in the other direction, to obtain a contradiction like before. Assume now that the degree of $v$ is less than $\ell+s$. Then $v$ is adjacent to every vertex of $B$, thus we can start a shortest path to $B$ from $v$, arriving to a case we have already dealt with. Observe that we completed the proof in the case $\ell=5$ and $s>1$.

Finally, assume that there are two independent edges $uv$ and $u'v'$ outside $B$. If any of these four vertices has degree more than $\ell+s$, then we can pick a neighbor that is also outside $B$ and arrive to the previous case. Otherwise these four vertices are each adjacent to each vertex in $B$. Let $x,y$ be vertices in $B$ (if $|B|=k\ge 2$), then we can consider the path $xuvyu'v'$. If $k\ge 3$, we can go to a third vertex of $B$ and then alternate between $B$ and $W$. This way for $k\ge 2$ we obtain a path on $2k+2$ vertices, with an endpoint in $B$, a contradiction. For $k=1$, $s=0$, the path $uvxu'v'$ gives the contradiction.
\end{proof}

Let us return to the proof. Let $Q$ be the set of the vertices not in $G_0$.  We have two cases. If $|Q|\ge\sqrt{n}$, then for large enough $n$ we have that $|Q|$ is sufficiently large, thus all the previous structural statements, in particular Claim \ref{struc} hold for $G[Q]$ as well, thus $G[Q]$ also contains a large component that is a subgraph of $H(k,m')$ or $H^*(k,m')$ or $F_m'$. It is easy to see that combining those components into one $H(k,m+m')$ or $H^*(k,m+m')$ or $F_{m+m'}$ increases both $e_r(G)$ and the number of copies of $S_r$.

The other case is that $|Q|<\sqrt{n}$. Then there are at most $\binom{|Q|}{r+1}$ copies of $S_r$ inside $Q$. Deleting all the edges inside $Q$ and adding all the edges between $Q$ and $B$ does not create $B(\ell,s)$. We created at least $k|Q| \binom{n-\gamma n}{r-1}$ copies of $S_r$. Then the number of copies of $S_r$ increased. It is easy to see that analogously $e_r$ increases. We obtained a contradiction that completes the proof of the upper bound. Moreover, we showed that any extremal graph is a subgraph of $H(k,n)$, $H^*(k,n)$ or $F_n$ in the appropriate cases. It is easy to see that erasing an edge from any of these graphs decreases $e_r(G)$, and erasing an edge from $H(k,n)$ or $H^*(k,n)$ decreases the number of copies of $S_r$, proving the uniqueness where it is claimed.

\vskip 0.3truecm

\textbf{Funding}: Research supported by the National Research, Development and Innovation Office - NKFIH under the grants SNN 129364, FK 132060, and KKP-133819.

\vskip 0.3truecm


\end{document}